\newif\ifdraft\draftfalse
\ifdraft\usepackage[notcite, notref]{showkeys}\fi
\newenvironment{enumeratea}
{\begin{enumerate}[\upshape (a)]}
{\end{enumerate}}
\newtheorem*{namedtheorem}{\theoremname}
\newcommand{\theoremname}{testing}
\newcommand\Fields{\operatorname{Fields}}
\newcommand\Sets{\operatorname{Sets}}
\newcommand\chr{\operatorname{char}}
\newcommand\Spec{\operatorname{Spec}}
\newcommand\Char{\operatorname{char}}
\newcommand\Pf{\operatorname{Pf}}
\newtheorem{theorem}[equation]{Theorem}
\newtheorem{proposition}[equation]{Proposition}
\newtheorem{proposition-definition}[equation]{Proposition-Definition}
\newtheorem{lemma}[equation]{Lemma}
\theoremstyle{definition}
\newtheorem{remark}[equation]{Remark}
\theoremstyle{remark}
 \newcommand\ZZ{\mathbb{Z}}
\newcommand\rC{\mathrm{C}}
 \newcommand\rT{\mathrm{T}}
\newcommand\rW{\mathrm{W}}
\newcommand\arr{\ifinner \to\else\longrightarrow\fi}
\newcommand\arrto{\ifinner\mapsto\else\longmapsto\fi}
\renewcommand\H{\operatorname{H}}
\newcommand\eqdef{\overset{\mathrm{\scriptscriptstyle def}} =}
\def\displaytimes_#1{\mathrel{\mathop{\times}\limits_{#1}}}
\def\displayotimes_#1{\mathrel{\mathop{\bigotimes}\limits_{#1}}}
\newcommand\rank{\operatorname{rank}}
\newcommand\generate[1]{\langle #1 \rangle}
\newcommand\pfister[1]{{\ll #1 \gg}}
\newcommand\doublelong[2]{\mathbin{\xymatrix{{}\ar@<3pt>[r]^{#1}
\ar@<-3pt>[r]_{#2}&}}}
\newlength{\ignora}
\newcommand{\ed}{\operatorname{ed}}
\newcounter{steps}
\newcommand{\trdeg}{\operatorname{tr\,deg}}
\newcommand{\mmu}{\boldsymbol{\mu}}
\newcommand\Spin{\mathbf{Spin}}
\newcommand\HSpin{\mathbf{HSpin}}
\newcommand\spin{\mathbf{Spin}}
\newcommand\SO{\mathbf{SO}}
\newcommand\Orth{\mathbf{O}}
\newcommand\Stab{\operatorname{Stab}}
\newcommand{\da}[1]{\pfister{\mspace{-3mu}{#1}\mspace{-3mu}}}
\begin{document}

\title[Essential dimension, spinor groups and 
quadratic forms]{Essential dimension,
spinor groups,\\and quadratic forms}

\author[Brosnan]{Patrick Brosnan$^\dagger$}

\author[Reichstein]{Zinovy Reichstein$^\dagger$}

\author[Vistoli]{Angelo Vistoli$^\ddagger$}

\address[Brosnan, Reichstein]{Department of Mathematics\\
The University of British Columbia\\
1984 Mathematics Road\\
Vancouver, B.C., Canada V6T 1Z2}

\address[Vistoli]{Scuola Normale Superiore\\Piazza dei Cavalieri 7\\
56126 Pisa\\Italy}

\email[Brosnan]{brosnan@math.ubc.ca}
\email[Reichstein]{reichst@math.ubc.ca}
\email[Vistoli]{angelo.vistoli@sns.it}

\begin{abstract} 
  We prove that the essential dimension of 
  the spinor group $\Spin_n$ grows exponentially with $n$ 
  and use this result to show that quadratic forms with 
  trivial discriminant and Hasse-Witt invariant 
  are more complex, in high dimensions, than previously expected.
\end{abstract}
\subjclass[2000]{Primary 11E04, 11E72, 15A66}

\thanks{$^\dagger$Supported in part by an NSERC discovery grants and by
PIMS Collaborative Research Group in Algebraic geometry, cohomology 
and representation theory}
\thanks{$^\ddagger$Supported in part by the PRIN Project ``Geometria
sulle variet\`a algebriche'', financed by MIUR}

\maketitle


\section{Introduction}
\label{s.intro}

Let $K$ be a field of characteristic different 
from $2$ containing a square root of $-1$,
$\rW(K)$ be the Witt ring of $K$ and $I(K)$ be 
the ideal of classes of even-dimensional forms 
in $\rW(K)$; cf.~\cite{lam}. By abuse of notation,
we will write $q \in I^a(K)$ if the Witt class on
the non-degenerate quadratic form $q$ defined over $K$
lies in $I^{a}(K)$.
%
It is well known that every $q \in I^{a}(K)$ can be expressed as 
a sum of the Witt classes of $a$-fold Pfister forms 
defined over $K$; see, e.g.,~\cite[Proposition II.1.2]{lam}.
If $\dim(q) = n$, it is natural to ask how many Pfister forms 
are needed.  When $a = 1$ or $2$, it is 
easy to see that $n$ Pfister forms always 
suffice; see Proposition~\ref{p.PfisterEasy}. 
In this paper we will prove the following result, 
which shows that the situation is quite different 
when $a = 3$.

\begin{theorem} \label{t.pfister} 
Let $k$ be a field of characteristic different 
from $2$ and $n \ge 2$ be an even integer.  Then there 
is a field extension $K/k$ and an $n$-dimensional 
quadratic form $q \in I^3(K)$ 
with the following property:  for any finite 
field extension $L/K$ of odd degree 
$q_L$ is not Witt equivalent to the sum of fewer than
   \[ \frac{2^{(n+4)/4} -n-2}{7}  \]
$3$-fold Pfister forms over $L$.
\end{theorem}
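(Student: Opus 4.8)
The plan is to deduce the theorem from the exponential lower bound on essential $2$-dimension proved in the body of the paper — say $\ed(\Spin_n;2)\ge 2^{\lfloor(n-1)/2\rfloor}-\binom n2$ — via the principle that a form in $I^3$ Witt equivalent to a short sum of $3$-fold Pfister forms carries a $\Spin_n$-structure of small essential dimension. We may replace $k$ by $k(\sqrt{-1})$ throughout, which does not affect the statement. Choose a field $K\supseteq k$ and a class $T\in\H^1(K,\Spin_n)$ with $\ed(T;2)=\ed(\Spin_n;2)$, and let $q\in\H^1(K,\SO_n)$ be its image. Then $q$ is an $n$-dimensional form of trivial discriminant (it is an $\SO_n$-torsor) and trivial Clifford invariant (the composite $\H^1(K,\Spin_n)\to\H^1(K,\SO_n)\to\H^2(K,\mu_2)$ vanishes), hence $q\in I^3(K)$ by Merkurjev's theorem identifying $I^2/I^3$ with $\H^2(-,\mu_2)$. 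I claim this $q$ has the asserted property.

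Suppose not: for some finite odd-degree $L/K$ one has $q_L\sim\phi_1\perp\cdots\perp\phi_m$ with $\phi_i=\pfister{a_i,b_i,c_i}$ and $m<\tfrac17(2^{(n+4)/4}-n-2)$. Assume $m$ even (the odd case is identical after replacing $\rho'$ below by $\rho'\perp\langle 1\rangle$, with no change to leading order). The crucial step is to discard the ``$\langle 1\rangle$''-summands: write $\phi_i=\langle 1\rangle\perp\phi_i'$ with $\dim\phi_i'=7$, put $\rho':=\phi_1'\perp\cdots\perp\phi_m'$, a form of dimension $7m$ defined over $F_0:=k(a_1,b_1,c_1,\dots,a_m,b_m,c_m)$, where $\trdeg_k F_0\le 3m$. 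Since $-1$ is a square, $\langle 1,1\rangle$ is hyperbolic, so the $m$ copies of $\langle 1\rangle$ cancel in pairs in the Witt ring and $[q_L]=\sum_i[\phi_i]=\sum_i[\phi_i']=[\rho'_L]$; similarly $[\rho']=\sum_i[\phi_i]\in I^3(F_0)$, so $\rho'$ has trivial discriminant and Clifford invariant and its $\SO_{7m}$-class lifts to a $\Spin_{7m}$-torsor $S'$ over $F_0$.

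Now recover $q_L$, together with a $\Spin_n$-structure, from $S'$ cheaply. Assume $7m\ge n$ (the case $7m<n$ is treated below) and set $w:=(7m-n)/2$; as $\rho'_L$ is Witt equivalent to the $n$-dimensional $q_L$ it has Witt index $\ge w$, and for any totally isotropic $w$-dimensional subspace $U\subseteq\rho'_L$ the space $U^\perp/U$ is Witt equivalent to $\rho'_L$ and $n$-dimensional, hence isometric to $q_L$. Using an isotropic — not an arbitrary non-degenerate — subspace is essential: $U^\perp/U$ then inherits the Witt class, hence the $I^3$-class, of $\rho'$, and the parabolic/Levi structure of $\Spin_{7m}$ promotes the reduction of $S'$ along the stabilizer of $U$ to a genuine $\Spin_n$-torsor $T''$ over the field of definition of $U$; checking this compatibility is the one structural point. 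The subspace $U$ supplied by the Witt equivalence is an $L$-point of the orthogonal Grassmannian of isotropic $w$-subspaces of $\rho'$, a variety over $F_0$ of dimension $\tfrac12\bigl(\binom{7m}2-w^2-\binom n2\bigr)$, so $U$ — and with it $T''$ — is defined over a field $F_1$ with $F_0\subseteq F_1\subseteq L$ and $\trdeg_k F_1\le 3m+\tfrac12\bigl(\binom{7m}2-w^2-\binom n2\bigr)$. Finally $T''$ and $T_L$ both lie over $[q_L]\in\H^1(L,\SO_n)$; since the centre $Z$ of $\Spin_n$ (here $\mu_4$ or $\mu_2\times\mu_2$) is central, $T_L=T''\cdot z$ for some $z\in\H^1(L,Z)$, represented by at most two classes in $L^\times$, so $T_L$ is defined over a field of transcendence degree at most $3m+\tfrac12\bigl(\binom{7m}2-w^2-\binom n2\bigr)+2$ over $k$.

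Essential $2$-dimension is insensitive to the odd-degree extension $K\to L$, so $\ed(T_L;2)=\ed(T;2)=\ed(\Spin_n;2)$ and hence
\[
2^{\lfloor(n-1)/2\rfloor}-\binom n2\ \le\ 3m+\tfrac12\Bigl(\tbinom{7m}2-w^2-\tbinom n2\Bigr)+2 .
\]
With $w=(7m-n)/2$ and $n$ even, a short computation turns this into $(7m+n)^2+10m-2n+16\ge 2^{(n+4)/2}$, whence $(7m+n+2)^2\ge 2^{(n+4)/2}$ and $m\ge\tfrac17(2^{(n+4)/4}-n-2)$ — contradicting the assumption. (In the remaining case $7m<n$ one divides $\rho'_L$ by a maximal isotropic subspace instead; since then $m<n/7$, the orthogonal Grassmannian has dimension $O(n^2)$, far below the exponential lower bound for $\ed(\Spin_n;2)$, so this case cannot occur. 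For the small $n$ with $\tfrac17(2^{(n+4)/4}-n-2)\le 1$ the statement is vacuous.) Apart from the input on $\ed(\Spin_n;2)$, proved earlier by the gerbe method, the two delicate points are the passage to pure parts — which lowers the ambient dimension from $8m$ to $7m$ and is exactly what produces the $7$ in the denominator — and the fact that only \emph{isotropic} subspaces transport $\Spin$-structures, which is what makes the bound on $\ed(T_L;2)$ quadratic rather than linear in $m$ (hence the exponent $n/4$ and not $n/2$); this last point is the main obstacle.
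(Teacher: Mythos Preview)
Your argument is correct and is essentially the paper's proof: pick a class of maximal essential $2$-dimension, pass to the sum $\rho'$ of pure parts of the Pfister forms (dropping the ambient dimension from $8m$ to $7m$), descend $\rho'$ to a field of transcendence degree $\le 3m$, use the isotropic Grassmannian to carry this descent back to the $n$-dimensional form, and solve the resulting quadratic inequality in $m$. The paper packages the bookkeeping slightly differently by working throughout with the image functor $\rT_n(K)=\im\bigl(\H^1(K,\Spin_n)\to\H^1(K,\SO_n)\bigr)$ rather than with $\Spin_n$-torsors, which replaces your parabolic/Levi step and centre correction ``$+2$'' by the single fibration inequality $\ed(\rT_n;2)\ge\ed(\Spin_n;2)-1$; this makes your ``one structural point'' disappear, since over $F_1$ one simply observes $U^\perp/U\in I^3(F_1)$ and hence it lies in $\rT_n(F_1)$. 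One small slip: in the side case $7m<n$ you should \emph{add} hyperbolic planes to $\rho'$ rather than quotient $\rho'_L$ by an isotropic subspace (i.e., $q_L\cong h_L^{(n-7m)/2}\oplus\rho'_L$ already descends to $F_0$), after which your conclusion that the bound is polynomial and hence contradicts the exponential lower bound goes through.
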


Our proof of Theorem~\ref{t.pfister} is based on the new results on
the essential dimension of the spinor groups $\Spin_n$ in 
Section~\ref{sect.spin}, which are of independent interest. 

\subsection*{Acknowledgements}
We would like to thank the Banff International Research Station in
Banff, Alberta (BIRS) for providing the inspiring meeting place where
this work was started.  We are grateful to A.~Merkurjev and
B.~Totaro for bringing the problem of computing the Pfister 
numbers $\Pf_k(a, n)$ to our attention and for contributing 
Proposition~\ref{p.PfisterEasy2}. We also thank N.~Fakhruddin 
for helpful correspondence.

\section{Essential dimension}
\label{s.ed}

Let $k$ be a field. We will write $\Fields_k$ for the category of
field extensions $K/k$.  Let $F\colon\Fields_k \arr\Sets$ be 
a covariant functor.

  Let $L/k$ be a field extension. We will say that  $a\in F(L)$
  \emph{descends} to an intermediate field $k \subseteq 
  K \subseteq L$ if $a$ is in the image of the induced map 
  $F(K) \arr F(L)$.

  The \emph{essential dimension} $\ed(a)$ of $a \in F(L)$ 
  is the minimum of the transcendence degrees $\trdeg_{k}K$ taken over
  all fields $k \subseteq K \subseteq L$ such that $a$ descends to $K$.

  The essential dimension $\ed(a; p)$ of $a$ at a prime integer $p$
  is the minimum of $\ed(a_{L'})$ taken over all finite field 
  extensions $L'/L$ such that the degree $[L':L]$ is prime to $p$.

The essential dimension $\ed F$ of the functor $F$ (respectively, 
the essential dimension $\ed(F; p)$ of $F$ at a prime $p$) is the
supremum of $\ed(a)$ (respectively, of $\ed(a; p)$)
taken over all $a\in F(L)$ with $L$ in $\Fields_k$.  


Of particular interest to us will be the Galois cohomology functors,
$F_G$ given by $K\leadsto \H^1(K , G)$, where $G$ 
is an algebraic group over $k$.  Here,
as usual, $\H^1(K, G)$ denotes the set of isomorphism classes of
$G$-torsors over $\Spec(K)$, in the fppf topology.  The essential
dimension of this functor is a numerical invariant of $G$, which,
roughly speaking, measures the complexity of $G$-torsors over fields.
We write $\ed G$ for $\ed \, F_G$ and $\ed(G; p)$ for $\ed(F_G; p)$.
Essential dimension was originally introduced in this context;
see~\cite{bur, reichstein, ry}.  The above definition of essential
dimension for a general functor $F$ is due to A.~Merkurjev; see
\cite{bf1}.

Recall that an action of an algebraic group $G$ on an algebraic 
variety $k$-variety $X$ is called ``generically free" 
if $X$ has a dense open subset $U$ such that $\Stab_G(x) = \{ 1 \}$
for every $x \in U(\overline{k})$.

\begin{lemma} \label{lem2.0} Let $G$ be a linear algebraic group
and $V$ be a generically free linear representation of $G$. 
Then $\ed(G) \le \dim(V) - \dim(G)$. 
\end{lemma}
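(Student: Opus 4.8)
The plan is to produce, from a generically free representation $V$, a compression of the generic torsor down to a subvariety of the expected dimension, via the standard "generic fibre" argument. Concretely, since $V$ is a generically free $G$-representation, there is a dense open $G$-invariant subset $U \subseteq V$ on which $G$ acts freely and which admits a geometric quotient $\pi\colon U \to U/G$ that is a $G$-torsor (after possibly shrinking $U$, using that $G$ is smooth on a dense open, and Galois descent / the theory of quotients for free actions of linear algebraic groups). Then $\dim(U/G) = \dim(V) - \dim(G)$. Let $K = k(U/G)$ be the function field of the quotient; the generic fibre of $\pi$ is a $G$-torsor $E$ over $\Spec K$, giving a class $[E] \in \H^1(K,G)$.

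The first key step is to check that $[E]$ is a \emph{versal} torsor, i.e. that every $G$-torsor over every field extension $F/k$ is, up to the usual specialization, obtained from $[E]$; more precisely, that for any $a \in \H^1(F,G)$ with $F/k$ an extension, $\ed(a) \le \trdeg_k K = \dim(V)-\dim(G)$. The standard way to see this: an arbitrary $G$-torsor $T \to \Spec F$ can be embedded $G$-equivariantly into $V \otimes_k F$ (because $T$ is a finite $F$-scheme with a free $G$-action, and $V$ contains a copy of the regular-type action generically — one uses that $V$ is generically free to find a $T$-point of $U$), and the resulting map $\Spec F \to U/G$ exhibits $a$ as pulled back from $U/G$; spreading this out over a finitely generated subfield of $F$ shows $a$ descends to a field of transcendence degree at most $\dim(U/G)$.

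Alternatively, and more cleanly, I would invoke the "no-name"/generic torsor principle directly: it is a standard fact (going back to the work cited in the excerpt, \cite{reichstein, ry, bf1}) that if $G$ acts generically freely on $V$ then the generic torsor has essential dimension equal to $\dim(V) - \dim(G)$ minus the dimension of a "compression", and in particular $\ed(G) \le \dim(V/G) = \dim(V)-\dim(G)$. So the proof reduces to: (1) construct the quotient torsor $U \to U/G$ with $U$ dense open in $V$ and $G$ acting freely; (2) observe $\dim(U/G) = \dim V - \dim G$; (3) show the generic fibre is versal, so that its essential dimension — hence $\ed(F_G)$ — is bounded by $\trdeg_k k(U/G)$.

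The main obstacle is step (3), the versality: one must show that an \emph{arbitrary} $G$-torsor over an \emph{arbitrary} field specializes from the generic one. The mechanism is that a $G$-torsor over $\Spec F$, being an $F$-scheme finite over $F$ equipped with a free $G$-action, admits a $G$-equivariant embedding into $V_F = V\otimes_k F$ landing inside $U_F$ — this uses generic freeness of $V$ to guarantee such an embedding exists (a $G$-equivariant $F$-point of $U$ pulled back along $T \to \Spec F$) — and then functoriality of the quotient gives a commutative square identifying $[T]$ with the pullback of $[E]$ along a morphism $\Spec F \to U/G$. Spreading out that morphism over a finitely generated $k$-subalgebra of $F$ and taking the function field of its image in $U/G$ gives an intermediate field of transcendence degree $\le \dim(U/G) = \dim V - \dim G$ to which $[T]$ descends. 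Since $[T]$ was arbitrary, $\ed(F_G) \le \dim(V)-\dim(G)$, as claimed.
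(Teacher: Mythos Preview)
The paper itself gives no argument here: its proof is a bare citation of \cite[Theorem~3.4]{reichstein} and \cite[Lemma~4.11]{bf1}. Your outline is the standard argument one finds in those references (construct a torsor $U \to U/G$ with $U$ open in $V$, then show its generic fibre is versal), so in that sense the approaches agree.

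However, your justification of versality contains a real error. You claim that a $G$-torsor $T \to \Spec F$ is ``an $F$-scheme finite over $F$''; this is false whenever $\dim G > 0$, since $T$ is then a form of $G$ and has $\dim T = \dim G$. The phrase ``$V$ contains a copy of the regular-type action generically'' likewise has no meaning for positive-dimensional $G$; as written, your reasoning only applies to finite group schemes. The correct mechanism is this: a $G$-equivariant morphism $T \to V_F$ over $F$ is the same thing as an $F$-point of the twist ${}^{T}V = (T \times V_F)/G$, and since $G$ acts on $V$ through $\GL(V)$, Hilbert~90 forces ${}^{T}V \simeq V_F$ as $F$-schemes. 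The dense open ${}^{T}U \subset {}^{T}V$ therefore has an $F$-point whenever $F$ is infinite, and that point is exactly the $G$-equivariant map $T \to U_F$ you need, inducing $\Spec F \to U/G$ and pulling $U$ back to $T$. With this correction the spreading-out step you describe goes through. (Your aside ``$G$ is smooth on a dense open'' is also garbled---presumably you mean that the \emph{action} is scheme-theoretically free on a dense open, which is what is needed to form the quotient torsor.)
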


\begin{proof} See~\cite[Theorem 3.4]{reichstein} (which is stated in
characteristic 0 only, but the proof goes through in any characteristic) 
or~\cite[Lemma 4.11]{bf1}.
\end{proof}

\begin{lemma} \label{lem2.1} If $G$ is an algebraic group and
$H$ is a closed subgroup of codimension $e$ then

\smallskip
(a) $\ed(G) \ge \ed(H) - e$, and 

\smallskip
(b) $\ed(G; p) \ge \ed(H; p) - e$ for any prime integer $p$.
\end{lemma}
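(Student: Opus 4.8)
The plan is to exploit the map $\H^1(-,H)\to\H^1(-,G)$ induced by the inclusion $H\subseteq G$, together with the principle that a reduction of structure group of a $G$\dash torsor from $G$ to $H$ is the same datum as a rational point of an associated bundle with fibre $G/H$, a variety of dimension $e$. All the substance is in part (a); part (b) follows by running the same argument relative to prime\dash to\dash$p$ extensions. So fix a field $L/k$ and an $H$\dash torsor $E$ over $L$, and let $E':=E\times^{H}G$ be the induced $G$\dash torsor, whose class is the image of $[E]$ under $\H^1(L,H)\to\H^1(L,G)$. The geometric input is the associated bundle $E'/H=E'\times^{G}(G/H)$: since $H$ is closed, $G/H$ is a quasi\dash projective variety, so $E'/H$ is a finite\dash type $L$\dash scheme, fppf\dash locally isomorphic to $G/H$ over the base and hence of dimension $e$. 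It carries a canonical $L$\dash point $x$, because the morphism $E\to E'/H$, $e\mapsto[e,\overline{1}]$, is constant on the $H$\dash orbits of $E$ and therefore factors through $E/H=\Spec L$. Conversely, for a $G$\dash torsor $P$ over any field $K'\supseteq k$, a point $y\in(P/H)(K')$ determines an $H$\dash torsor $P_y$, the fibre of $P\to P/H$ over $y$, with $P_y\times^{H}G\cong P$; and unwinding the definitions shows that the fibre of $E'\to E'/H$ over the canonical point $x$ is $E$ itself.

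Now suppose $E'$ descends to an intermediate field $k\subseteq K\subseteq L$ with $\trdeg_{k}K=\ed(E')$, say $E'\cong(E_{0}')_{L}$ for a $G$\dash torsor $E_{0}'$ over $K$. Then $E_{0}'/H$ is a $K$\dash variety of dimension $e$, and the image in $E_{0}'/H$ of the canonical point $x\in(E'/H)(L)=(E_{0}'/H)(L)$ has residue field $K'$ with $\trdeg_{k}K'\le\trdeg_{k}K+e=\ed(E')+e$. The fibre of $(E_{0}')_{K'}\to(E_{0}'/H)_{K'}$ over $x$ is an $H$\dash torsor over $K'$ whose base change to $L$ is $E$, so $E$ descends to $K'$, giving $\ed(E)\le\ed(E')+e\le\ed(G)+e$. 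Taking the supremum over all $H$\dash torsors $E$ over all $L\in\Fields_{k}$ yields $\ed(H)\le\ed(G)+e$, which is (a).

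For (b), let $E$ be an $H$\dash torsor over $L$ and choose a finite extension $L'/L$ of degree prime to $p$ realizing $\ed\bigl((E\times^{H}G)_{L'}\bigr)=\ed(E\times^{H}G;p)$. Applying the estimate of (a) to the $H$\dash torsor $E_{L'}$ over $L'$, and using $(E\times^{H}G)_{L'}=E_{L'}\times^{H}G$, gives $\ed(E_{L'})\le\ed(E\times^{H}G;p)+e$; on the other hand $\ed(E;p)\le\ed(E_{L'})$ since $[L':L]$ is prime to $p$. Hence $\ed(E;p)\le\ed(E\times^{H}G;p)+e\le\ed(G;p)+e$, and taking the supremum over all $E$ gives $\ed(H;p)\le\ed(G;p)+e$, i.e.\ (b). The main obstacle is the torsor\dash theoretic dictionary used at the end of the first paragraph — constructing $P/H$ as a finite\dash type scheme of dimension $e$, and matching its rational points with reductions of structure group in a way compatible with base change — since once that is in hand both the transcendence\dash degree bookkeeping and the two descent steps are entirely formal.
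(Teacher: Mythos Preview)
Your argument is correct and is essentially the standard proof: it is the content of Berhuy--Favi's Theorem~6.19 (which the paper simply cites for part~(a)), carried out explicitly and then mildly extended to handle the prime-to-$p$ version. The paper itself gives no argument beyond pointing to \cite[Theorem~6.19]{bf1} and \cite[Principle~2.10]{BrosnanAB}, so there is nothing to compare at the level of strategy; you have supplied exactly the proof those references contain, namely: push an $H$-torsor to a $G$-torsor, descend the $G$-torsor to a small field $K$, and then recover a descended $H$-torsor by taking the fibre of $E_0'\to E_0'/H$ over the image of the canonical section, paying at most $\dim(G/H)=e$ extra transcendence degrees.

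Two minor remarks on presentation. First, when you form $E_0'/H$ as a \emph{scheme}, you are implicitly using that $G/H$ is quasi-projective so that its twisted forms are representable; this is fine for linear algebraic groups (the setting used in the paper), and for the bare dimension count an algebraic space would suffice anyway. Second, in part~(b) you could note that what you actually prove is the pointwise inequality $\ed(E;p)\le\ed(E\times^{H}G;p)+e$ for every $H$-torsor $E$, which is slightly sharper than the statement and is exactly how \cite[Principle~2.10]{BrosnanAB} is phrased.
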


\begin{proof}
Part (a) is~\cite[Theorem 6.19]{bf1}. Both (a) and (b) follow
directly from~\cite[Principle 2.10]{BrosnanAB}.
\end{proof}

If $G$ is a finite abstract group,  we will write $\ed_k G$ 
(respectively, $\ed_k(G; p)$) for the essential dimension 
(respectively, for the essential dimension at $p$) 
of the constant group $G_k$ over the field $k$.  
Let $\rC(G)$ denote the center of $G$.  

\begin{theorem}\label{t.p-groups} 
Let $G$ be a $p$-group whose commutator $[G, G]$ is central and cyclic.
Then
$ \ed_k(G; p) = \ed_k G = \sqrt{|G/\rC(G)|} + \rank \, \rC(G) - 1$ 
for any base field $k$ of characteristic $\ne p$ containing  
a primitive root of unity of degree equal to the exponent of $G$.
\end{theorem}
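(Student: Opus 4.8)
The plan is to establish the two inequalities $\ed_k(G;p) \ge \sqrt{|G/\rC(G)|} + \rank\,\rC(G) - 1$ and $\ed_k G \le \sqrt{|G/\rC(G)|} + \rank\,\rC(G) - 1$ separately, since $\ed_k(G;p) \le \ed_k G$ always holds. For the upper bound, I would produce an explicit faithful representation of $G_k$ of the right dimension and invoke Lemma~\ref{lem2.0}. The key structural input is that $[G,G]$ is central and cyclic, so $G$ is a central extension $1 \to \rC(G) \to G \to \overline{G} \to 1$ with $\overline{G} = G/\rC(G)$ abelian, and the commutator pairing $\overline{G} \times \overline{G} \to [G,G]$ is a (possibly degenerate, but then we quotient) alternating bilinear form. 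After splitting off a maximal central torus-like factor corresponding to $\rank\,\rC(G)$, the essential part of $G$ behaves like a Heisenberg-type group: its faithful irreducible representations have dimension $\sqrt{|G/\rC(G)|}$ (this is where the primitive root of unity of order $\exp(G)$ is needed, to have enough characters of the center defined over $k$), and by taking a sum of such an irreducible with $\rank\,\rC(G) - 1$ characters that together separate $\rC(G)$, one gets a faithful representation of dimension $\sqrt{|G/\rC(G)|} + \rank\,\rC(G) - 1$. One then checks this representation is generically free — for a finite group faithfulness essentially gives this after passing to a dense open — and applies Lemma~\ref{lem2.0} with $\dim G = 0$.

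For the lower bound, which I expect to be the main obstacle, the natural strategy is a cohomological one: exhibit a field $K/k$ and a $G$-torsor (equivalently a class in $\H^1(K,G)$) whose essential dimension at $p$ is at least the claimed number. The standard tool here is to detect essential dimension via nontoriality of cohomological invariants, or more robustly via the "canonical dimension / incompressibility" machinery. Concretely, I would consider the versal $G$-torsor and compute, or bound from below, the minimal transcendence degree of a field of definition after odd-degree (prime-to-$p$) extensions. The cleanest route uses the fact that for such $G$, torsors correspond to a tuple consisting of a central simple algebra (or a collection of cyclic algebras) whose Brauer class is controlled by $[G,G]$ together with a collection of independent $\mu_{e}$-torsors coming from $\rC(G)$; the index of the relevant algebra, generically, forces $\sqrt{|G/\rC(G)|}$ independent parameters, while the $\rank\,\rC(G)$ independent Kummer classes each need one parameter but share one (the "$-1$"). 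Making "forces independent parameters" precise is exactly the hard step: one needs that these invariants cannot be simultaneously defined over a smaller field even after prime-to-$p$ base change, which is typically proved by a valuation-theoretic / residue argument on a field like $k(x_1,\dots,x_m)$ with a suitable tuple of independent variables, showing the residues detect the invariant.

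The subtle points to watch are: (i) the hypothesis on roots of unity is used in both directions — in the upper bound to realize the irreducible representation over $k$, and in the lower bound to ensure the Kummer/Brauer classes are "honest"; (ii) the equality $\ed_k(G;p) = \ed_k G$ is really the assertion that the lower bound at $p$ already matches the unconditional upper bound, so the lower-bound argument must be carried out at the prime $p$, i.e. it must survive all odd-degree (more precisely, prime-to-$p$ degree) extensions — this is why residue arguments modulo $p$, rather than rational equivalence arguments, are the right tool; and (iii) the reduction splitting off the rank of the center from the "symplectic core" of $G/\rC(G)$ must be done carefully so that the dimensions add up exactly, with the $-1$ appearing because one character's worth of information about the center is already contained in the irreducible summand. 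I would organize the write-up as: first the group-theoretic normal form for $G$; then the representation-theoretic upper bound; then the cohomological lower bound at $p$; and finally assemble, noting the sandwich $\ed_k(G;p) \le \ed_k G \le$ (upper bound) $=$ (lower bound) $\le \ed_k(G;p)$.
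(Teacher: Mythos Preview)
The paper does not actually prove this theorem: it states that the result was first obtained in the authors' preprint \cite{brv} via the essential dimension of $\mmu_{p^n}$-gerbes, notes that it follows from the subsequent Karpenko--Merkurjev theorem \cite{km2} that $\ed_k(G;p)=\ed_k G$ equals the minimal dimension of a faithful $k$-representation, and then explicitly omits the proof. So your proposal should be measured against those references rather than against anything in the paper itself.

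Your upper bound is fine and is exactly the representation-theoretic half of the Karpenko--Merkurjev formulation: build a faithful representation as one irreducible of dimension $\sqrt{|G/\rC(G)|}$ on which the center acts through a faithful character of $[G,G]$, plus $\rank\rC(G)-1$ further characters to separate the rest of the center. Faithful implies generically free for finite groups, so Lemma~\ref{lem2.0} applies.

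The lower bound is where your outline has a real gap. You correctly flag that ``forces independent parameters'' is the crux, but the method you propose---a valuation-theoretic residue argument on $k(x_1,\dots,x_m)$---does not yield this bound. Residue/symbol arguments detect nontriviality of cohomological invariants and typically give lower bounds that are linear in a number of generators (this is, for instance, how one gets $\ed(G;p)\ge \rank G$ for elementary abelian $G$); they do not see the full index of the associated division algebra, which is what produces the $\sqrt{|G/\rC(G)|}$ term. Both actual proofs rest on Karpenko's $p$-incompressibility theorem for (products of generalized) Severi--Brauer varieties, proved via Chow groups and Steenrod operations: in \cite{brv} this enters through a formula relating $\ed$ of a $\mmu_{p^r}$-gerbe to the canonical $p$-dimension of its underlying Brauer class, and in \cite{km2} it enters directly in bounding $\ed(G;p)$ from below by the dimension of any faithful representation. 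Your phrase ``canonical dimension / incompressibility machinery'' is the right pointer, but it is not a strategy one can carry out with residues; you would need to invoke Karpenko's theorem (or reprove it), and that is the entire content of the lower bound.
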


Note that with the above hypotheses, $|G/\rC(G)|$ is a complete
square. Theorem~\ref{t.p-groups} was originally proved in~\cite{brv} 
as a consequence of our study of essential dimension of gerbes 
banded by $\mmu_{p^n}$.  Karpenko and Merkurjev~\cite{km2} have 
subsequently refined our arguments to show that the essential 
dimension of any finite $p$-group over any field $k$ containing 
a primitive $p$th root of unity is the minimal dimension of a faithful 
linear $G$-representation defined over $k$. Using~\cite[Remark 4.7]{km2}
Theorem~\ref{t.p-groups} is easily seen to be a special case of their
formula.  For this reason we omit the proof here.

\section{Essential dimension of Spin groups}
\label{sect.spin}

As usual, we will write
$\langle a_1,\ldots, a_n \rangle$ for the quadratic form $q$ of rank $n$
given by $q(x_1,\ldots, x_n)=\sum_{i=1}^n a_i x_i^2$. Let   
\begin{equation} \label{e.h_K}
h = \langle 1, -1 \rangle
\end{equation}
denote the 2-dimensional hyperbolic quadratic form over $k$.
For each $n\ge 0$ we define the $n$-dimensional split form 
$q_n^{\rm split}$ defined over $k$ as follows as follows:
\[ q_n^{\rm split} =   \begin{cases}
  h^{\oplus n/2}, & \text{if $n$ is even,}\\
  h^{\oplus (n-1/2)} \oplus \langle 1 \rangle, & \text{if $n$ is odd.}
\end{cases} \]
Let $\Spin_n \eqdef \Spin(q_n^{\rm split})$ be the split form of 
the spin group.  We will also denote the split forms of the orthogonal 
and special orthogonal groups by $\Orth_n\eqdef\Orth(q_n^{\rm split})$ 
and $\SO_n\eqdef\SO(q_n^{\rm split})$ respectively.
The main result of this section is the following theorem. 

\begin{theorem} \label{thm.spin} (a) Let $k$ be a field 
of characteristic $\ne 2$ and $n \ge 15$ be an integer. 
\[   \ed(\Spin_{n}; 2) \ge \begin{cases}
\text{$2^{(n-1)/{2}} - \frac{n(n-1)}{2}$, if $n$ is odd,} \\    
\text{$2^{(n-2)/{2}} - \frac{n(n-1)}{2}$, if $n \equiv 2 \pmod{4}$, } \\    
\text{$2^{(n-2)/2} - \frac{n(n-1)}{2} + 1$, if $n \equiv 0 \pmod{4}$.}
\end{cases}    
\]
(b) Moreover if $\Char(k) = 0$ then

\smallskip
$\ed(\Spin_{n}) = \ed(\Spin_n; 2) = 2^{(n-1)/{2}} - \frac{n(n-1)}{2}$, 
if $n$ is odd,

\smallskip
   $\ed(\Spin_{n}) = \ed(\Spin_n; 2) = 2^{(n-2)/2} - \frac{n(n-1)}{2}$, 
if $n \equiv 2 \pmod{4}$, and

\smallskip
$\ed(\Spin_{n}; 2) \le \ed(\Spin_{n}) \le  
2^{(n-2)/2} - \frac{n(n-1)}{2} + n$, 
if $n \equiv 0 \pmod{4}$.
\end{theorem}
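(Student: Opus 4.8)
\emph{Strategy.} The plan is to prove the lower bounds of part~(a) by exhibiting a suitable finite $2$\dash subgroup of $\Spin_n$ and combining Theorem~\ref{t.p-groups} with Lemma~\ref{lem2.1}(b), and to prove the upper bounds of part~(b) by exhibiting a generically free linear representation and applying Lemma~\ref{lem2.0}. Here one uses $\dim\Spin_n=\tfrac{n(n-1)}{2}$, that the spin representation $\Delta$ of $\Spin_n$ has dimension $2^{(n-1)/2}$ when $n$ is odd, and that a half-spin representation $\Delta$ has dimension $2^{(n-2)/2}$ when $n$ is even.

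\emph{Lower bound.} Since essential dimension at a prime does not increase under extension of the base field, in part~(a) we may enlarge $k$ and assume $\sqrt{-1}\in k$; then $q_n^{\rm split}\cong\langle 1,\dots,1\rangle$, and we fix an orthogonal basis $e_1,\dots,e_n$ with $e_i^2=1$, regarded as elements of the Clifford algebra of $q_n^{\rm split}$. Let $N\subseteq\Spin_n$ be the subgroup generated by the products $e_ie_j$ with $i\ne j$. Since $e_ie_j$ acts on the quadratic space as the product of the reflections in $e_i$ and $e_j$, the group $N$ is the full preimage under $\Spin_n\to\SO_n$ of the group of diagonal sign changes of determinant $1$, which is elementary abelian of order $2^{n-1}$; hence $N$ is a constant finite $2$\dash group of order $2^n$, the relations $e_ie_j=-e_je_i$ give $[N,N]=\ker(\Spin_n\to\SO_n)=\mmu_2$, which is central and cyclic, and so Theorem~\ref{t.p-groups} applies to $N$. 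The induced commutator pairing on $N/[N,N]$ is the mod~$2$ intersection pairing $(S,T)\mapsto(-1)^{|S\cap T|}$ on even-size subsets of $\{1,\dots,n\}$, whose radical is trivial for $n$ odd and is spanned by the class of $\{1,\dots,n\}$ for $n$ even; therefore $\rC(N)=\mmu_2$ for $n$ odd, while for $n$ even $\rC(N)=\langle -1,e_1\cdots e_n\rangle$ has order $4$ and, since $(e_1\cdots e_n)^2=(-1)^{n(n-1)/2}$ in our basis, is cyclic precisely when $n\equiv 2\pmod 4$ and is $\mmu_2\times\mmu_2$ when $n\equiv 0\pmod 4$. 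Feeding $|N/\rC(N)|\in\{2^{n-1},2^{n-2}\}$ and $\rank\rC(N)\in\{1,2\}$ into the formula of Theorem~\ref{t.p-groups}, and then applying Lemma~\ref{lem2.1}(b) with $e=\codim_{\Spin_n}N=\tfrac{n(n-1)}{2}$, yields the three inequalities of part~(a).

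\emph{Upper bound.} Now assume $\Char k=0$ and $n\ge 15$. When $n$ is odd or $n\equiv 2\pmod 4$ the representation $\Delta$ is faithful, and --- granting generic freeness (see below) --- Lemma~\ref{lem2.0} gives $\ed(\Spin_n)\le\dim\Delta-\dim\Spin_n$, which is the value asserted. When $n\equiv 0\pmod 4$ the kernel of $\Delta$ is a central $\mmu_2$ different from $\ker(\Spin_n\to\SO_n)$, so instead one takes $\Delta\oplus V$ with $V$ the $n$\dash dimensional vector representation; this is faithful (and generically free), so Lemma~\ref{lem2.0} gives $\ed(\Spin_n)\le\dim\Delta+n-\dim\Spin_n$. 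Combining these with the lower bounds of part~(a) and the trivial inequality $\ed(\Spin_n)\ge\ed(\Spin_n;2)$ gives the asserted equalities for $n$ odd and for $n\equiv 2\pmod 4$.

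\emph{Main obstacle.} The only step that is not purely formal is the generic freeness of the (half\dash)spin representation for $n\ge 15$, and of $\Delta\oplus V$ when $4\mid n$. For small $n$ these representations have positive-dimensional generic stabilizers --- for instance $G_2\subset\Spin_7$ on its $8$\dash dimensional spin representation, $\Spin_7\subset\Spin_9$, and $G_2\times G_2\subset\Spin_{14}$ --- so one must show this phenomenon disappears once $n\ge 15$. I would do this by a dimension count reducing the generic stabilizer to a finite group and then a case analysis excluding nontrivial finite stabilizers (or by appealing to the known tables of generic stabilizers of irreducible representations of simple algebraic groups); this is also the point at which the hypothesis $\Char k=0$ enters part~(b).
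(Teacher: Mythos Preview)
Your approach is essentially identical to the paper's: your $N$ is the paper's $G_n$, the center computation and application of Theorem~\ref{t.p-groups} and Lemma~\ref{lem2.1}(b) match, and the representations you use for the upper bounds are the same ones the paper uses, with generic freeness for the (half\dash)spin representation supplied by citing the tables of Andreev--Popov and A.~M.~Popov. The one point where the paper adds something concrete that you leave open is generic freeness of $\Delta\oplus V$ when $4\mid n$: since this representation is reducible, the tables for irreducible representations do not apply directly, and the paper instead notes that the generic stabilizer of a vector in $V=k^n$ is a copy of $\Spin_{n-1}$, on which the half\dash spin $\Delta$ restricts to the full spin representation, thereby reducing to the odd case (with $n-1\ge 15$) already handled.
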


Note that while the proof of part (a) below goes through for any $n \ge 3$, 
our lower bounds become negative (and thus vacuous) for $n \le 14$.

\begin{proof} (a) Since replacing $k$ by a larger field $k'$ can only decrease
the value of $\ed(\Spin_n; 2)$, we may assume without loss of generality
that $\sqrt{-1} \in k$. The $n$-dimensional split quadratic 
form $q_n^{\rm split}$ is then $k$-isomorphic to
   \begin{equation} \label{e.squares}
   q(x_{1}, \dots, x_{n}) = -(x_{1}^{2} + \dots + x_{n}^{2}).
   \end{equation}
over $k$ and hence, we can write $\Spin_n$ as $\Spin(q)$, $\Orth_n$ as 
$\Orth_n(q)$ and $\SO_n$ as $\SO_n(q)$.

Let $\Gamma_{n} \subseteq \SO_{n}$ be the subgroup consisting 
of diagonal matrices. This subgroup is isomorphic to $\mmu_{2}^{n-1}$. 
Let $G_{n}$ be the inverse image of $\Gamma_{n}$ in 
$\Spin_{n}$; this is a constant group scheme over $k$.  
%
%
%
%
%
By Lemma~\ref{lem2.1}(b) 
\[ \ed(\Spin_n; 2) \ge \ed(G_{n}; 2) - \frac{n(n-1)}{2} \, . \]
Thus in order to prove the lower bounds of part (a),
it suffices to show that 
\begin{equation} \label{e.edG_n}
\ed(G_n; 2) = \ed(G_n) = \begin{cases} \text{$2^{(n-1)/2}$, if $n$ is odd,} \\ 
 \text{$2^{(n-2)/2}$, if $n \equiv 2$ (mod $4$),} \\
 \text{$2^{(n-2)/2} + 1$, if $n$ is divisible by $4$.} \end{cases} 
\end{equation}
The structure of the finite $2$-group $G_{n}$ is well understood; 
see, e.g.,~\cite{wood}. 
Recall that 
the Clifford algebra $A_{n}$ of the quadratic 
form $q$, as in~\eqref{e.squares} is the algebra
given by generators $e_{1}$, \dots,~$e_{n}$, 
and relations $e_{i}^{2} = -1$,
$e_{i}e_{j} + e_{j}e_{i} = 0$ for all $i \neq j$. 
For any $I \subseteq \{1, \dots, n\}$ 
write $I = \{i_{1}, \dots, i_{r}\}$ with 
$i_{1} < i_{2} < \dots < i_{r}$ set 
$e_{I} \eqdef e_{i_{1}} \dots e_{i_{r}}$. 
Here $e_{\emptyset} = 1$.
The group $G_{n}$ consists of the elements of 
$A_{n}$ of the form $\pm e_{I}$, 
where $I \subseteq \{1, \dots, n\}$ and the cardinality
$|I|$ of $I$ is even.  The element $-1$ is central, and 
the commutator $[e_I, e_J]$ is given by
$ [e_{I}, e_{J}] = (-1)^{|I \cap J|}$ .
It is clear from this description that $G_n$ is a $2$-group
of order $2^n$, the commutator subgroup $[G_n, G_n] = \{ \pm 1 \}$ 
is cyclic, and the center $\rC(G)$ is as follows: 
\[ \rC(G_n) = \begin{cases} 
\text{$\{ \pm 1 \} \simeq \ZZ/2 \ZZ$, if $n$ is odd,} \\
\text{$\{ \pm 1, \pm e_{\{ 1, \dots, n \}} \} \simeq \ZZ/4 \ZZ$, 
if $n \equiv 2$ (mod $4$),} \\
\text{$\{ \pm 1, \pm e_{ \{ 1, \dots, n \} } \} \simeq \ZZ/2 \ZZ \times \ZZ/2 \ZZ$, 
if $n$ is divisible by $4$.} \end{cases} \]
Formula~\eqref{e.edG_n} now follows from Theorem~\ref{t.p-groups}. 

\smallskip
(b) Clearly $\ed(\Spin_n; 2) \le \ed(\Spin_n)$. Hence,
we only need to show that for $n \ge 15$
\begin{equation} \label{e.spin-upper-bounds}
\ed(\Spin_{n}) \le \begin{cases}
\text{$2^{(n-1)/{2}} - \frac{n(n-1)}{2}$, if $n$ is odd,} \\    
\text{$2^{(n-2)/{2}} - \frac{n(n-1)}{2}$, if $n \equiv 2 \pmod{4}$,} \\    
\text{$2^{(n-2)/2} - \frac{n(n-1)}{2} + n$, if $n \equiv 0 \pmod{4}$.}
\end{cases}    
\end{equation}
In view of Lemma~\ref{lem2.0} it suffices to show that $\Spin_n$ has
a generically free linear representation $V$ of dimension
\begin{equation} \label{e.reps}
\dim(V) = \begin{cases}
\text{$2^{(n-1)/{2}}$, if $n$ is odd,} \\    
\text{$2^{(n-2)/{2}}$, if $n \equiv 2 \pmod{4}$,} \\    
\text{$2^{(n-2)/2} + n$ if $n \equiv 0 \pmod{4}$.}
\end{cases}    
\end{equation}
Our construction of $V$ will be based on the following 
fact; see~\cite[Theorem 7.11]{pv}.

\begin{lemma} \label{fact.gen-free}
The following representations of $\Spin_n$ over a field $k$ 
of characteristic 0 are generically free:

\smallskip 
(i) the spin representation, if $n \ge 15$ is odd, 

\smallskip 
(ii) either of the two half-spin representation, if 
$n = 4m + 2$ for some integer $m \ge 4$. 
\end{lemma}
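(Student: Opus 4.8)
The plan is to prove Lemma~\ref{fact.gen-free} by invoking the classification of generically free representations of simple algebraic groups, which is precisely the content of~\cite[Theorem 7.11]{pv}; however, since we need the explicit statement for the spin and half-spin representations in the stated range, I would first recall the relevant generalities and then do the case-by-case bookkeeping. First I would recall the standard reduction: for a semisimple group $G$ acting on a representation $V$, genericity of the action is controlled by the stabilizer in general position, and a theorem of Richardson (in characteristic $0$) guarantees the existence of such a stabilizer; thus it suffices to show that for a generic vector $v \in V$ the stabilizer $\Stab_{\Spin_n}(v)$ is trivial. A convenient sufficient criterion, also going back to Popov--Vinberg, is that $\dim V > \dim \Spin_n$ together with the condition that the generic stabilizer is finite \emph{and} that it contains no nontrivial element of the center; more practically, one checks that a generic stabilizer is trivial directly from the known list.

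Next I would organize the argument around the known dimensions. For $n = 2m+1$ odd, the spin representation has dimension $2^m = 2^{(n-1)/2}$, while $\dim \Spin_n = \binom{n}{2} = \frac{n(n-1)}{2}$; for $n \ge 15$ odd one has $2^{(n-1)/2} > \frac{n(n-1)}{2}$, so the dimension hypothesis of the Popov--Vinberg tables is satisfied, and their classification of \emph{non}-generically-free (or non-``visible'') representations shows that the spin representation of $\Spin_n$ is generically free in this range. Concretely, the relevant exceptions in low rank (the spin representations of $\Spin_7$, $\Spin_9$, $\Spin_{11}$, $\Spin_{13}$, which have generic stabilizers $G_2$, $\Spin_7$, $\SL_6$, $\SL_3 \times \SL_3$ or similar, or at least nontrivial generic stabilizer) all occur for $n \le 14$; once $n \ge 15$ the generic stabilizer is trivial. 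Similarly for $n = 4m+2$, each half-spin representation has dimension $2^{n/2 - 1} = 2^{(n-2)/2}$, and for $m \ge 4$, i.e. $n \ge 18$, one has $2^{(n-2)/2} > \frac{n(n-1)}{2}$, so the same reference gives that each half-spin representation is generically free (the half-spin representations of $\Spin_{10}$ and $\Spin_{14}$, which have generic stabilizers of positive dimension, are the borderline cases that force the bound $m \ge 4$).

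The main obstacle — and the only real content — is matching the numerical ranges ``$n \ge 15$'' and ``$n = 4m+2$, $m \ge 4$'' to the precise list of exceptions in~\cite[Theorem 7.11]{pv}, and in particular verifying that there are no ``unexpected'' non-generically-free representations hiding just above these thresholds. I would handle this by citing the Popov--Vinberg tables directly: their classification is complete, so it suffices to observe that the spin representation of $\Spin_n$ for odd $n$, and the half-spin representations of $\Spin_n$ for $n \equiv 2 \pmod 4$, appear on their list of generically free representations exactly in the ranges claimed. A secondary, more technical point is that~\cite{pv} works over an algebraically closed field of characteristic $0$; since generic freeness is a geometric condition (it can be checked after base change to $\overline{k}$) and the representations in question are defined over the prime field, the statement descends to an arbitrary field $k$ of characteristic $0$. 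I would therefore conclude:

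\begin{proof}
Generic freeness of a linear representation is a geometric property, so we may assume $k = \overline{k}$, hence $k = \CC$ after a further harmless extension; the assertion then follows from the classification of generically free irreducible representations of simple algebraic groups in~\cite[Theorem 7.11]{pv}. Indeed, for odd $n = 2m+1 \ge 15$ the spin representation has dimension $2^m > \binom{n}{2} = \dim \Spin_n$, and it appears in the Popov--Vinberg list of representations with trivial stabilizer in general position precisely in this range; the spin representations of $\Spin_n$ with $7 \le n \le 13$ are the only odd-dimensional exceptions, and they have positive-dimensional or at least nontrivial generic stabilizer. Likewise, for $n = 4m+2$ with $m \ge 4$ each of the two half-spin representations has dimension $2^{(n-2)/2} > \binom{n}{2}$ and is generically free by the same classification, the half-spin representations of $\Spin_{10}$ and $\Spin_{14}$ being the borderline cases excluded by the hypothesis $m \ge 4$.
\end{proof}
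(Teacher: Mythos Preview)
Your proposal is correct and takes essentially the same approach as the paper: both reduce to the known classification of irreducible representations of simple groups with trivial stabilizer in general position. The only difference is bibliographic --- the paper's proof cites the primary sources directly, splitting into the range $n \ge 27$ (handled by Andreev--Popov~\cite{ap}) and the borderline cases $15 \le n \le 25$ (handled by A.~M.~Popov~\cite{ampopov}), whereas you cite the compilation~\cite[Theorem 7.11]{pv}; your added remark that generic freeness descends from $\overline{k}$ is a useful clarification the paper leaves implicit.
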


\begin{proof} For $n \ge 27$ this follows directly 
from \cite[Theorem 1]{ap}.  For $n$ between $15$ and $25$ 
this is proved in \cite{ampopov}.
\end{proof}

Since the dimension of the spin representation of $\Spin_n$ is
$2^{(n-1)/2}$ (for $n$ odd) and the dimension of the half-spin
representation is $2^{(n-2)/2}$ (for $n$ even),
this completes the proof of part (b) in the cases where $n \ge 15$
is not divisible by $4$.

In the case where $n \ge 16$ is divisible by $4$, 
we define $V$ as the sum of the half-spin representation $W$ of $\Spin_n$
and the natural representation $k^n$ of $\SO_n$, which we will view 
as a $\Spin_n$-representation via 
the projection $\Spin_n \to \SO_n$.  It remains to check that
$V = W \times k^n$ is a generically free representation of $\Spin_n$. 
Indeed, for $a \in k^n$ in general position, $\Stab(a)$ is conjugate 
to $\Spin_{n-1}$ (embedded in $\Spin_n$ in the standard way).
Thus it suffices to show that the restriction of $W$ to $\Spin_{n-1}$
is generically free. Since $W$ restricted to $\Spin_{n-1}$
is the spin representation of $\Spin_{n-1}$ (see, 
e.g.,~\cite[Proposition 4.4]{adams}), and $n \ge 16$,
this follows from Lemma~\ref{fact.gen-free}(i). 
\end{proof}

\begin{remark} \label{rem.char-p} 
The only time the characteristic $0$ assumption is used
in part (b) is in the proof of Lemma~\ref{fact.gen-free}.
It is likely that Lemma~\ref{fact.gen-free} (and thus 
Theorem~\ref{thm.spin}(b)) remain true if $\Char(k) = p > 0$ 
but we have not checked this. 

If $p \ne 2$ and $\sqrt{-1} \in k$, we have the weaker (but 
asymptotocally equivalent)
upper bound $\ed(\Spin_n) \le \ed(G_n)$, where $\ed(G_n)$
is given by~\eqref{e.edG_n}. This is a consequence of the fact that
every $\Spin_n$-torsor admits reduction of structure to
$G_n$, i.e., the natural map $\H^1(K, G_n) \to \H^1(K, \Spin_n)$
is surjective for every field $K/k$; cf.~\cite[Lemma 1.9]{bf1}.
\end{remark}

\begin{remark} \label{rem.cs}
The best previously known lower bounds on $\ed(\Spin_n)$,
due of V.~Chernousov and J.--P.~Serre~\cite{cs}, were
\[ \ed(\Spin_n; 2) \geq \begin{cases} \lfloor n/2 \rfloor + 1 &\text{if $n
    \ge 7$
    and $n \equiv 1$, $0$ or $-1 \pmod{8}$} \\
  \lfloor n/2 \rfloor &\text{for all other $n \ge 11$.} \end{cases} \] 
(The first line is due to B.~Youssin and the second author 
in the case that $\chr k=0$~\cite{ry}.)  Moreover, in low 
dimensions, M.~Rost~\cite{rost} 
computed the following table of exact values
\begin{center} \renewcommand{\arraystretch}{1.25}
\rule{0pt}{14pt}
\begin{tabular}{r@{${}={}$}lr@{${}={}$}lr@{${}={}$}lr@{${}={}$}l}
$\ed \Spin_3$ & $0$ &$ \ed \Spin_4$ & $0$ & $\ed \Spin_5$ & $0$ &
   $\ed \Spin_6$ & $0$ \\
$\ed \Spin_7$ & $4$ & $\ed \Spin_8$ & $5$ & $\ed \Spin_9$ & $5$ &
   $\ed \Spin_{10}$ & $4$\\
$\ed \Spin_{11}$ & $5$ & $\ed \Spin_{12}$ & $6$ & $\ed
\Spin_{13}$ & $6$ &
   $\ed \Spin_{14}$ & $7$;
\end{tabular}
\end{center}
cf. also~\cite{garibaldi}. 
Taken together these results seemed to suggest
that $\ed\Spin_n$ should be a slowly increasing
function of $n$ and gave no hint of its exponential growth.
\end{remark}

\begin{remark} A. S. Merkurjev (unpublished) recently strengthened our lower 
bound on $\ed(\Spin_n;2)$, in the case where $n \equiv 0 \pmod{4}$ as follows:
\[ \ed(\Spin_n;2) \ge 2^{(n-2)/2} - \frac{n(n-1)}{2} + 2^m \, , \]
where $2^m$ is the highest power of $2$ dividing $n$. If $n \ge 16$ is a power
of $2$ and $\Char(k) = 0$ this, in combination with the upper bound of 
Theorem~\ref{thm.spin}(b), yields 
\[ \ed(\Spin_n; 2) = \ed(\Spin_n) =  
2^{(n-2)/2} - \frac{n(n-1)}{2} + n \, . \]
In particular, $\ed(\Spin_{16}) = 24$. The first value of $n$ 
for which $\ed(\Spin_n)$ is not known is $n = 20$, where 
$326 \le \ed(\Spin_{20}) \le 342$. 
\end{remark}

\begin{remark} \label{rem.pin} 
The same argument can be applied to the half-spin groups
yielding
   \[ \ed(\HSpin_{n}; 2) = \ed(\HSpin_{n}) = 2^{(n-2)/2} - \frac{n(n-1)}{2} \]
for any integer $n \ge 20$ divisible by $4$ over any field 
of characteristic $0$.
Here, as in Theorem~\ref{thm.spin}, the lower bound 
\[ \ed(\HSpin_{n}; 2) \ge 2^{(n-2)/2} - \frac{n(n-1)}{2} \]
is valid for over any base field $k$ of characteristic $\ne 2$. 
The assumptions that $\Char(k) = 0$ and
$n \ge 20$ ensure that the half-spin representation 
of $\HSpin_{n}$ is generically free; see~\cite[Theorem 7.11]{pv}. 
\end{remark}

\begin{remark}
Theorem~\ref{thm.spin} implies that for large $n$,
$\Spin_n$ is an example of a split, semisimple,
connected linear algebraic group whose essential 
dimension exceeds its dimension. Previously no examples 
of this kind were known, even for $k = \mathbb{C}$.

Note that no complex connected semisimple adjoint
group $G$ can have this property. Indeed, every such $G$
has a generically free representation
$V = \mathfrak{g} \times \mathfrak{g}$, where 
$\mathfrak{g}$ is the adjoint representation of $G$ on its
Lie algebra; cd., e.g.,~\cite[Lemma 3.3(b)]{richardson}.
Thus $\ed G \le \dim (G)$ by Lemma~\ref{lem2.0}.
\end{remark}

\begin{remark} Since $\ed\SO_n = n - 1$ for every $n \ge 3$
(cf.~\cite[Theorem 10.4]{reichstein}), for large $n$, $\Spin_n$ 
is also an example of a split, semisimple, connected linear
algebraic group $G$ with a central subgroup $Z$ such
that $\ed G>\ed G/Z$. To the best of our knowledge, 
this example is new as well.
\end{remark}

\section{Pfister numbers}
\label{s.arason}

Let $K$ be a field of characteristic not equal to $2$ 
and $a \ge 1$ be an integer. We will continue to denote the Witt ring
of $K$ by $W(K)$ and its fundamental ideal by $I(K)$. If non-singular
quadratic forms $q$ and $q'$ over $K$ are Witt equivalent, we will write
$q \sim q'$.

As we mentioned in the introduction, the $a$-fold Pfister forms
generate $I^a(K)$ as an abelian group. In other words,
every $q \in I^a(K)$ is Witt equivalent to
$
\sum_{i=1}^r \pm p_i, 
$
where each $p_i$ is an $a$-fold Pfister form over $K$.
We now define the \emph{$a$-Pfister number} of $q$ to be
the smallest possible number $r$ of Pfister forms 
appearing in any such sum.  The \emph{$(a,n)$-Pfister number} 
$\Pf_k(a, n)$ is the supremum of the
$a$-Pfister number of $q$, taken over all field extensions 
$K/k$ and all $n$-dimensional forms $q \in I^a(K)$.   

\begin{proposition}
\label{p.PfisterEasy}  
Let $k$ be a field of characteristic 
  $\neq 2$ and let $n$ be a positive even integer. 
  \begin{enumeratea}
  \item $\Pf_k(1,n) \leq n$.
  \item $\Pf_k(2,n) \leq n - 2$.
  \end{enumeratea}
\end{proposition}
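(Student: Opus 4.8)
The plan is to establish both bounds by an induction on $n$ that peels off one (or two) Pfister forms at a time, using the standard structure theory of $I$ and $I^2$.

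\medskip

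For part (a), let $q \in I(K)$ have dimension $n$. Since $q$ is even-dimensional we can write $q \simeq \langle a_1, a_2 \rangle \perp q'$ with $q'$ of dimension $n-2$, and then $\langle a_1, a_2 \rangle = \langle a_1 \rangle \perp \langle a_2 \rangle$ is Witt equivalent to $\langle a_1 \rangle \perp \langle a_2 \rangle$; the point is that $\langle a_1, -(-a_1 a_2) \rangle$ is a scaled $1$-fold Pfister form $\langle\langle -a_1 a_2 \rangle\rangle$ up to the factor $a_1$, so $\langle a_1, a_2 \rangle \sim a_1 \langle\langle -a_1 a_2 \rangle\rangle = \langle\!\langle -a_1a_2\rangle\!\rangle$ scaled — more cleanly, $\langle a_1, a_2\rangle = \langle a_1\rangle\langle\!\langle a_1 a_2\rangle\!\rangle$ as forms, and a scalar multiple of a $1$-fold Pfister form is again a $1$-fold Pfister form since $\langle b\rangle\langle\!\langle c\rangle\!\rangle \simeq \langle b, bc\rangle \simeq \langle\!\langle -bc\cdot(\text{something})\rangle\!\rangle$. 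So I would simply observe that every binary form is similar to a $1$-fold Pfister form, hence Witt equivalent to $\pm$ a $1$-fold Pfister form. Splitting $q$ into $n/2$ binary blocks then writes $q$ as a sum of $n/2$ Pfister forms — but this only gives $n/2$, which is better than claimed, so more carefully one notes $q$ need not lie in $I$ block-by-block; instead induct: given $q \in I(K)$ of dimension $n$, pick any $\langle a \rangle$ appearing, pair it with another entry $\langle b \rangle$, subtract the $1$-fold Pfister form $p = \langle\!\langle -ab\rangle\!\rangle$ scaled to match — then $q - (\pm p)$ is Witt equivalent to a form of dimension $\le n-2$ still in $I(K)$, and apply induction. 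Base case $n = 2$ uses one Pfister form. This yields $\Pf_k(1,n) \le n/2 \le n$; I would present the crude bound $n$ if that is all that is needed downstream, but the clean induction gives $n/2$. (Since the proposition only claims $\le n$, the simple block decomposition suffices.)

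\medskip

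For part (b), let $q \in I^2(K)$ have dimension $n$. The key input is that $I^2(K)$ is generated by $2$-fold Pfister forms, and more usefully, that a form in $I^2$ has trivial discriminant, so after scaling and cancelling hyperbolics one can arrange $q \simeq \langle a, b, -c \rangle \perp q''$ in a way that lets us split off a single $2$-fold Pfister form $\langle\!\langle x, y \rangle\!\rangle$ so that $q - \langle\!\langle x, y\rangle\!\rangle$ is Witt equivalent to a form of dimension $\le n-2$ that still lies in $I^2(K)$. Concretely: $q$ is even-dimensional with $\disc(q) = 1$; write $q \simeq \langle a_1 \rangle \perp \langle a_2 \rangle \perp q_1$ where $q_1$ has dimension $n - 2$; the $2$-fold Pfister form $p = \langle\!\langle -a_1 a_2, a_1 \rangle\!\rangle$ — or an appropriate Albert-type choice — agrees with $\langle a_1, a_2 \rangle$ in its first two slots after scaling, so that $q' := q \perp (-p)$ has a hyperbolic plane splitting off, giving $\dim_{\mathrm{an}}(q') \le n - 2$, and $q' \in I^2(K)$ because both $q$ and $p$ are. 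Then induct. The base of the induction is $n = 4$: a $4$-dimensional form in $I^2$ is similar to a $2$-fold Pfister form (this is the classical fact that $4$-dimensional forms of trivial discriminant are scaled Pfister forms), so the $2$-Pfister number is $\le 1 = n - 3$, consistent with $\le n - 2$. Tracking the count: starting from dimension $n$ and dropping by $2$ at each step until reaching dimension $2$ (which is hyperbolic in $I^2$, contributing $0$) or dimension $4$, we use at most $(n-2)/2$ Pfister forms — again better than claimed. To get exactly the stated $n - 2$ one can afford to be wasteful; but I would give the sharp inductive argument and note it implies the stated bound a fortiori.

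\medskip

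The main obstacle is making the "peel off one Pfister form and drop the dimension by $2$" step genuinely work in $I^2$: one must choose the Pfister form $p$ so that \emph{both} (i) $q \perp (-p)$ has strictly smaller anisotropic dimension (ideally by $2$) and (ii) $q \perp (-p)$ still lies in $I^2(K)$, i.e., has trivial discriminant — property (ii) is automatic once $p \in I^2$, so the real content is (i), which is the statement that $p$ can be chosen to "share a binary subform" with $q$ up to scaling. For $I^2$ this is where the discriminant condition must be used to guarantee such a $p$ exists represented by $q$; the cleanest route is via the chain equivalence / simple decomposition lemmas in Lam's book (the reference \cite{lam} already cited), specifically the fact that any form in $I^2$ is chain-$p$-equivalent to a sum of $2$-fold Pfister forms with controlled length. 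So concretely I would: (1) prove the binary-form-is-Pfister observation for (a); (2) state and use the decomposition $q = \langle a_1, \ldots, a_n\rangle$ and the identity expressing $\langle a_1, a_2, a_1 a_2 a_3\rangle$ as differing from a $2$-fold Pfister form by a lower-dimensional form; (3) run the two inductions; (4) check the base cases $n = 2$ and $n = 4$. I expect step (2) — pinning down the exact algebraic identity that realizes the dimension drop while staying in $I^2$ — to be where the actual work lies; everything else is bookkeeping.
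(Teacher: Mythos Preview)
Your proposal contains a genuine and recurring error: you conflate ``similar to a Pfister form'' (i.e., a scalar multiple $c\cdot p$) with ``Witt equivalent to $\pm$ a Pfister form''. These are not the same. A scaled Pfister form $c\cdot p$ is isometric to $p$ only when $p$ represents $c$ (roundness), which fails generically. This error drives both halves of your argument.

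For part (a), you claim every binary form is ``Witt equivalent to $\pm$ a $1$-fold Pfister form'', giving $\Pf_k(1,n)\le n/2$. This is false already for $n=2$: over $\QQ$, the form $\langle 3,5\rangle$ represents neither $1$ nor $-1$, so it is not isometric to $\pm\langle 1,c\rangle$ for any $c$, and being anisotropic of dimension $2$ it cannot be Witt equivalent to one either. The paper instead uses the identity $\langle a_1,a_2\rangle \sim \langle 1,a_1\rangle - \langle 1,-a_2\rangle$, expressing each binary block as a \emph{difference of two} Pfister forms; with $n/2$ blocks this gives exactly the bound $n$.

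For part (b), your base case asserts that a $4$-dimensional form in $I^2$ has $2$-Pfister number $\le 1$ because it is a scaled Pfister form. Again false: over $K=k(c)$ with $p=\langle\!\langle a,b\rangle\!\rangle$ anisotropic over $k$, the form $c\cdot p_K$ is an anisotropic $4$-dimensional form in $I^2(K)$ that represents neither $1$ nor $-1$ (Springer's theorem for the $c$-adic place), hence is not $\pm$ a Pfister form. So $\Pf_k(2,4)=2$, not $1$, and your claimed bound $(n-2)/2$ is wrong. Moreover, your inductive step is underspecified: if $p$ merely shares a binary subform with $q$, then $q\perp(-p)$ has anisotropic dimension $\le n$, not $\le n-2$, so the dimension does not drop. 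The paper's proof is not inductive at all: assuming $\sqrt{-1}\in K$ and $a_1\cdots a_n=1$, it writes down the explicit telescoping identity
\[
\langle a_1,\dots,a_n\rangle \sim \sum_{i=2}^{n-1}\langle\!\langle a_i,\,a_1\cdots a_{i-1}\rangle\!\rangle,
\]
which one checks directly, and then adjusts signs for general $K$.
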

  
\begin{proof}
  (a) Immediate from the identity 
\[ \langle a_1, a_ 2 \rangle \sim
\langle 1, a_1 \rangle - \langle 1, - a_2 \rangle = \da{-a_1} - \da{a_2} \]
in the Witt ring.

  (b) Let $q=\langle a_1,\ldots, a_n\rangle$ be an $n$-dimensional
quadratic form over $K$. Recall that 
  $q \in I^2(K)$ iff $n$ is even and $d_{\pm}(q)=1$, modulo 
$(K^*)^2$~\cite[Corollary II.2.2]{lam}. Here $d_{\pm}(q)$ 
is the signed discriminant
  given by $(-1)^{n(n-1)/2}d(q)$ where $d(q)=\prod_{i=1}^n a_n$ is the
  discriminant of $q$; cf.~\cite[p. 38]{lam}.  

To explain how to write $q$ as a sum of $n-2$ Pfister forms,
we will temporarily assume that $\sqrt{-1} \in K$. In this case 
we may assume that $a_1 \dots a_n = 1$. Since $\langle a, a \rangle$
is hyperbolic for every $a \in K^*$, we see that
$q = \langle a_1, \dots, a_n \rangle$ is Witt equivalent to
\[ \ll a_2, a_1 \gg \oplus 
\ll a_3, a_1a_2 \gg \oplus \cdots \oplus \ll a_{n-1}, a_1 \dots a_{n-2} \gg \, . \]
By inserting appropriate powers of $-1$, we can modify this formula so that
it remains valid even if we do not assume that $\sqrt{-1} \in K$, as follows:
\[ q = \langle a_1, \dots, a_n \rangle \sim
\sum_{i=2}^n (-1)^i \da{(-1)^{i+1} a_i, (-1)^{i(i-1)/2 +1} a_1 \dots a_{i-1}} 
\qedhere \]
\end{proof}

We do not have an explicit upper bound on $\Pf_k(3, n)$; 
however, we do know that $\Pf_k(3, n)$ is finite for any $k$ and any $n$.
To explain this, let us recall that $I^3(K)$ is the set of all classes
$q \in\rW(K)$ such that $q$ has even dimension, trivial signed 
discriminant and trivial Hasse-Witt invariant~\cite{invol}.  
The following result was suggested to us by Merkurjev and Totaro.

\begin{proposition}
\label{p.PfisterEasy2} Let $k$ be a field of characteristic 
different from~$2$.  Then $\Pf_k(3,n)$ is finite.
\end{proposition}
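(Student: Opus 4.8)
The plan is to prove finiteness of $\Pf_k(3,n)$ by a compactness/specialization argument, reducing the problem to a single "generic" form and then showing that a bound for the generic form propagates to all forms of the same dimension. Concretely, fix an even integer $n$. Every $n$-dimensional form $q \in I^3(K)$ over an extension $K/k$ is obtained by specialization from a versal form: consider the $\SO_n$-torsor (or, more precisely, the appropriate torsor recording the trivialization of the discriminant and Hasse--Witt invariant) classifying such forms, take its function field $K_0/k$, and let $q_0 \in I^3(K_0)$ be the tautological $n$-dimensional form there. The key point I would establish is that if $q_0$ is Witt equivalent over $K_0$ to a sum of $r$ triple Pfister forms, then for \emph{every} $K/k$ and every $n$-dimensional $q \in I^3(K)$, the form $q$ is Witt equivalent to a sum of $r$ triple Pfister forms over $K$. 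This is because $q$ is a specialization of $q_0$: there is a $k$-place (or a point of the parameter variety with residue field related to $K$) carrying $q_0$ to $q$, and a Pfister decomposition $q_0 \sim \sum_{i=1}^r \da{x_i,y_i,z_i}$ lives over a dense open subset of the parameter space (we may clear denominators so the $x_i,y_i,z_i$ are regular and nonvanishing on a dense open), hence specializes. One must check that the Witt-equivalence relation is preserved under good specialization — this is a standard consequence of the Milnor exact sequence / the fact that residue maps on the Witt ring kill specializable relations, or can be seen directly by a transcendence-basis argument — and that the decomposition involves only finitely many parameters so the "bad locus" is a proper closed subset.

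The second ingredient is that $q_0$ \emph{does} admit some finite Pfister decomposition over $K_0$; this is immediate from the stated fact (used already in the introduction and recalled at the start of Section~\ref{s.arason}) that the triple Pfister forms generate $I^3$ as an abelian group, applied to the field $K_0$. So for each $n$ there is a finite bound $r_n$ (namely the $3$-Pfister number of the particular form $q_0$), and by the specialization argument $\Pf_k(3,n) \le r_n < \infty$. The only subtlety is making the notion of "parameter space for $n$-dimensional forms in $I^3$" precise: one convenient route is to note that an $n$-dimensional form is just a point of the variety of symmetric $n\times n$ matrices with nonzero determinant, the conditions $d_\pm = 1$ and Hasse--Witt trivial are locally closed (cut out by the vanishing of certain polynomials in the entries together with invertibility conditions, using $2 \in k^*$), and over the function field $K_0$ of an irreducible component of this locus one has the generic form $q_0$. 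Then any $q \in I^3(K)$ comes, after possibly enlarging $K$ (which only helps, since more Pfister forms are available), from a $K$-point of this locus, hence from a specialization of $q_0$.

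The main obstacle I anticipate is the specialization step for Witt equivalence: one needs that a relation $q_0 \sim \sum \da{x_i,y_i,z_i}$ over $K_0 = k(X)$ specializes to the corresponding relation over the residue field at a sufficiently general point of $X$. The cleanest way to handle this is to set it up so that both sides extend to quadratic \emph{forms over a local ring} $\cO_{X,v}$ (after clearing denominators, the entries of $q_0$ and the scalars $x_i, y_i, z_i$ are units in $\cO_{X,v}$ for $v$ in a dense open), observe that Witt equivalence of two forms of the same dimension over a field is detected after passing to the generic point but can be witnessed by an isometry over the local ring of a chain of hyperbolic planes — more carefully, $q_0 \oplus \mathbb{H}^{\oplus a} \cong \bigl(\bigoplus_i \da{x_i,y_i,z_i}\bigr) \oplus \mathbb{H}^{\oplus b}$ for suitable $a,b$, and an isometry of nondegenerate quadratic forms over $K_0$ with unit coefficients extends over $\cO_{X,v}$ for $v$ in a dense open (the isometry matrix has entries in $K_0$, hence in $\cO_{X,v}$ away from finitely many divisors, and its determinant is a unit there). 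Specializing such an isometry to the residue field $k(v)$ gives the desired decomposition over $k(v)$, and since any $q \in I^3(K)$ arises this way (with $k(v)$ a subfield of a suitable extension of $K$), we conclude. Everything else is routine bookkeeping with the dimension count of the parameter variety and the fact that enlarging the base field does not increase Pfister numbers.
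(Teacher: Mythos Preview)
Your core idea --- produce a single ``generic'' $n$-dimensional form in $I^3$ over some extension of $k$, write it as a finite sum of $3$-fold Pfister forms there, and then specialize --- is exactly the paper's approach. The paper packages all of this into one line by invoking a versal $\Spin_n$-torsor $E$ over a field $L/k$ (in the sense of \cite{gms}): the associated quadratic form $q_L$ under $\H^1(L,\Spin_n)\to\H^1(L,\Orth_n)$ is the required generic form, and the defining property of versality already contains the specialization statement you unroll by hand.

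There is, however, a genuine gap in your ``convenient route.'' The conditions $d_\pm(q)=1$ and ``Hasse--Witt invariant trivial'' are \emph{not} locally closed conditions on the entries of a symmetric matrix over $k$. The first asks that an element of $K^*$ be a square, the second that a class in $\Br(K)[2]$ vanish; neither is cut out by polynomial equations and inequations with coefficients in $k$ (over $\overline{k}$ both conditions are automatic, so any such locally closed set would have to be all of the nondegenerate locus, which is absurd as soon as $k$ is not quadratically closed). So there is no variety over $k$ whose $K$-points are the $n$-dimensional forms in $I^3(K)$, and your parameter space does not exist as described.

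The fix is precisely what you gesture at in your parenthetical: the ``appropriate torsor recording the trivialization of the discriminant and Hasse--Witt invariant'' is a $\Spin_n$-torsor, since the image of $\H^1(K,\Spin_n)\to\H^1(K,\Orth_n)$ consists exactly of the $n$-dimensional forms lying in $I^3(K)$. Taking a versal $\Spin_n$-torsor (which exists because $\Spin_n$ embeds in some $\GL_N$) gives the correct generic object, and the built-in versality --- that every torsor over every extension arises from a $K$-point of any dense open of the base --- absorbs the spreading-out and specialization argument you sketched. Your discussion of extending the isometry over a local ring is correct in spirit and is essentially what the versal formalism encodes; you just need the right classifying object to run it against.
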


\begin{proof}[Sketch of proof]
  Let $E$ be a versal torsor for $\Spin_n$ over a field extension
  $L/k$; cf. \cite[Section I.V]{gms}.
  Let $q_L$ be the quadratic form over $L$ corresponding to
  $E$ under the map $\H^1(L,\Spin_{n})\to \H^1(L,\Orth_{n})$.  The
  $3$-Pfister number of $q_L$ is then an upper bound for the
  $3$-Pfister number of any $n$-dimensional form in $I^3$ 
over any field extension $K/k$.
\end{proof}

\begin{remark}
  For $a > 3$ the finiteness of $\Pf_k(a,n)$ is an open problem.
\end{remark}

\section{Proof of Theorem~\ref{t.pfister}}
The goal of this section is to prove Theorem~\ref{t.pfister}
stated in the introduction, which says, in particular, that
   \[ \Pf_k(3, n) \ge \frac{2^{(n+4)/4} -n-2}{7}  \]
for any field $k$ of characteristic different from $2$
and any positive even integer $n$.
Clearly, replacing $k$ by a larger field $k'$ strengthens
the assertion of Theorem~\ref{t.pfister}. Thus, 
we may assume without loss of generality that $\sqrt{-1} \in k$.
This assumption will be in force for the remainder of this section.

For each extension $K$ of $k$, denote by $\rT_n(K)$ the image of
$\H^{1}(K, \Spin_{n})$ in $\H^{1}(K, \SO_{n})$. 
We will view $\rT_{n}$ as a functor $\Fields_{k} \arr \Sets$. 
Note that $\rT_n(K)$ is the set of isomorphism classes 
of $n$-dimensional quadratic forms $q \in I^3(K)$.

\begin{lemma} \label{lem.T-Spin}
We have the following inequalities:
\begin{enumeratea} 
\item $\ed\Spin_{n} - 1 \leq \ed\rT_{n} \leq \ed\Spin_{n}$,
\item $\ed(\Spin_{n}; 2) - 1 \leq \ed(\rT_{n}; 2) \leq \ed(\Spin_{n}; 2)$.
\end{enumeratea}
\end{lemma}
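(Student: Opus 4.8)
The key relationship is that $\rT_n$ is the image of the functor $F_{\Spin_n} = \H^1(-,\Spin_n)$ under the natural transformation induced by the central isogeny $\Spin_n \to \SO_n$, with kernel $\mmu_2$. The kernel of the map $\H^1(K,\Spin_n) \to \H^1(K,\SO_n)$ on each field $K$ is the image of $\H^1(K,\mmu_2) = K^*/(K^*)^2$, so two $\Spin_n$-torsors have the same image in $\rT_n(K)$ precisely when they differ by a ``twist'' coming from a single square class. This is a one-dimensional ambiguity, which is the source of the $-1$ in both lower bounds.

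The upper bounds $\ed\rT_n \le \ed\Spin_n$ and $\ed(\rT_n;2) \le \ed(\Spin_n;2)$ are immediate from the general principle that essential dimension does not increase under a surjective natural transformation of functors: if $q \in \rT_n(K)$, lift it to a $\Spin_n$-torsor $E \in \H^1(K,\Spin_n)$, descend $E$ to a subfield $K_0 \subseteq K$ with $\trdeg_k K_0 = \ed(E)$, and then the image of the descended torsor witnesses that $q$ descends to $K_0$ as well; for the prime-to-$2$ version one first passes to a finite odd-degree extension. Since $\ed$ of a functor is a supremum over all $q$, the bound follows.

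For the lower bounds $\ed\Spin_n - 1 \le \ed\rT_n$ and $\ed(\Spin_n;2) - 1 \le \ed(\rT_n;2)$, the plan is: given any $\Spin_n$-torsor $E$ over any $K/k$, let $q \in \rT_n(K)$ be its image, and suppose $q$ descends to $K_0 \subseteq K$ with $\trdeg_k K_0 = \ed(q)$. Choose a $\Spin_n$-torsor $E_0$ over $K_0$ lifting the descended form. Over $K$, both $E$ and $E_0\otimes_{K_0} K$ map to $q$, so they differ by a class $\alpha \in K^*/(K^*)^2 = \H^1(K,\mmu_2)$; write $E = (E_0)_K \cdot \iota_*(\alpha)$ where $\iota\colon \mmu_2 \hookrightarrow \Spin_n$ is the central inclusion. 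Now $\alpha$ descends to $K_0(\sqrt{a})$ for a single element $a \in K^*$, hence to a subfield $K_1$ with $\trdeg_{k} K_1 \le \trdeg_k K_0 + 1 = \ed(q)+1$ that contains both $K_0$ and (a representative allowing us to define) $\alpha$. Then $E$ descends to $K_1$, so $\ed(E) \le \ed(q) + 1$. Taking suprema over all $E$ gives $\ed\Spin_n \le \ed\rT_n + 1$; the prime-to-$2$ statement is the same argument carried out after replacing $L$ by a suitable finite odd-degree extension throughout (noting odd-degree base change commutes with all the maps involved).

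The main obstacle is the bookkeeping in the lower-bound argument: one must be careful that the element $a \in K^*$ representing the square-class $\alpha$ can be adjoined to $K_0$ without increasing transcendence degree by more than one — this requires observing that $\alpha$, being a single square class, lies in the image of $\H^1(K_1,\mmu_2)$ for some $K_1$ with $\trdeg_k K_1 \le \trdeg_k K_0 + 1$, which follows from the fact that $\mmu_2$ has essential dimension $1$ (equivalently, any element of $K^*/(K^*)^2$ descends to $k(t)$ for a single transcendental $t$, or to $K_0$ itself if $a$ is already a square class over $K_0$). One should also check that the twisting operation $E \mapsto E \cdot \iota_*(\alpha)$ genuinely realizes every fiber of $\H^1(K,\Spin_n) \to \rT_n(K)$ — this is standard torsor-twisting for a central subgroup, so the map $\H^1(K,\mmu_2) \to \H^1(K,\Spin_n)$ followed by the quotient by the action is exactly the fiber, but it is worth stating cleanly.
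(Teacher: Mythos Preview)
Your proof is correct and follows exactly the strategy the paper uses: the paper simply invokes the fibration-of-functors framework of Berhuy--Favi (Definition~1.12, Proposition~1.13, and Lemma~1.9 of \cite{bf1}) applied to $\H^1(-,\mmu_2)\leadsto \H^1(-,\Spin_n)\to \rT_n$, and what you have written is precisely an unpacking of those cited results in this special case. One small notational slip in your lower-bound argument: the intermediate field should be $K_1 = K_0(a)\subseteq K$ (adjoin a representative $a\in K^*$ of the square class~$\alpha$), not $K_0(\sqrt a)$---your parenthetical clarification shows you have the right idea, but the expression as written is garbled.
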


\begin{proof}
  In the language of~\cite[Definition 1.12]{bf1}, we 
have a fibration of functors
\[
\H^1(\ast , \mmu_{2})\leadsto \H^1(\ast ,\Spin_n)\arr \rT_{n}(\ast).
\]
The first inequality in part (a) follows from~\cite[Proposition 1.13]{bf1}
and the second from Proposition~\cite[Lemma 1.9]{bf1}.
The same argument proves part (b).
\end{proof}

Let $K/k$ be a field extension. Let
$h_{K} = \langle 1, -1 \rangle$ be the 
$2$-dimensional hyperbolic form over $K$. (Note in Section~\ref{sect.spin} 
we wrote $h$ in place of $h_k$; see~\eqref{e.h_K}.)  
For each $n$-dimensional quadratic form $q \in I^3(K)$, 
let $\ed_{n}(q)$ denote the essential dimension 
of the class of $q$ in $\rT_{n}(K)$.

\begin{lemma}\label{lem:decrease-ed}
Let $q$ be an $n$-dimensional quadratic form in $I^3(K)$. Then 
   \[ \ed_{n+2s}(h_{K}^{\oplus s} \oplus q)\geq \ed_{n}(q)
      - \frac{s(s + 2n -1)}{2} \]
for any integer $s \ge 0$. 
\end{lemma}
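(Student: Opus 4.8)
The plan is to produce, from any intermediate field $K_0$ over which the class of $h_K^{\oplus s}\oplus q$ in $\rT_{n+2s}(K)$ descends, an intermediate field over which the class of $q$ in $\rT_n(K)$ descends, at the cost of adding at most $s(s+2n-1)/2$ to the transcendence degree. Concretely, suppose $q' \in \rT_{n+2s}(K_0)$ maps to the isomorphism class of $h_K^{\oplus s}\oplus q$ under $\rT_{n+2s}(K_0)\to \rT_{n+2s}(K)$; I want to ``split off'' $s$ hyperbolic planes from $q'$ after a controlled field extension and recover $q$.

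First I would use the Witt cancellation / decomposition theory over $K_0$: write $q' \simeq h_{K_0}^{\oplus t}\oplus q_0^{\mathrm{an}}$ with $q_0^{\mathrm{an}}$ anisotropic over $K_0$, where $t$ is the Witt index of $q'$. Over $K$ we have $q'_K \simeq h_K^{\oplus s}\oplus q$, so by Witt cancellation $q$ and $h_{K_0}^{\oplus(t-s)}\oplus q_0^{\mathrm{an}}$ (tensored up to $K$) are isomorphic — in particular $t\ge s$ — and the form $q_1 := h_{K_0}^{\oplus (t-s)}\oplus q_0^{\mathrm{an}}$ over $K_0$ is an $n$-dimensional form in $I^3(K_0)$ whose image in $\rT_n(K)$ is exactly the class of $q$. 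The only issue is that there is no reason $q_1$ should be defined over $K_0$ with $\trdeg_k K_0$ small enough; but $q_1$ \emph{is} already over $K_0$, so actually this direct approach shows $\ed_n(q)\le \trdeg_k K_0$, with no correction term at all.

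Since the statement has a correction term, the subtlety must be that splitting off hyperbolics is \emph{not} free: to pass from a descent datum for $h_K^{\oplus s}\oplus q$ to one for $q$ one must choose the $s$ hyperbolic summands, and the ``space of hyperbolic $s$-subspaces'' of the quadratic space is a variety of dimension $s(s+2n-1)/2$ over the base. So the real argument is the reverse inequality packaged correctly: given a field $K_0$ with $q\in\rT_n(K_0)$ realizing the class over $K$, one does \emph{not} immediately get $h_K^{\oplus s}\oplus q$ over $K_0$ with the right essential dimension — wait, one does, since $h$ is defined over $k$. Re-reading: the inequality goes the \emph{other} way, $\ed_{n+2s}(h^{\oplus s}\oplus q)\ge \ed_n(q) - s(s+2n-1)/2$, i.e.\ adding hyperbolics can \emph{decrease} essential dimension, but by no more than $s(s+2n-1)/2$. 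So the task is: from a small field $K_1$ over which $h_{K}^{\oplus s}\oplus q$ descends, build a field $K_2\supseteq K_1$ over which $q$ descends with $\trdeg_{K_1}K_2 \le s(s+2n-1)/2$. The recipe: over $K_1$ pick a form $q''$ of dimension $n+2s$ descending the class; over $K$ it becomes $h_K^{\oplus s}\oplus q$, which contains a totally isotropic subspace of dimension $s$ whose orthogonal-complement-modulo-radical is $q$. The variety $Y$ over $K_1$ parametrizing totally isotropic $s$-dimensional subspaces of $q''$ is a (twisted) orthogonal Grassmannian of dimension $\binom{s}{2} + s(n+s) - \binom{s}{2}\cdot(\text{...})$; its dimension is $\dim \Orth_{n+2s} - \dim\Orth_n - \dim\GL_s = \binom{n+2s}{2}-\binom n2 - s^2$, and a short computation gives exactly $s(s+2n-1)/2$. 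Take a point $y$ of $Y$ over the generic point of an irreducible component dominating the relevant $K$-point, let $K_2 = K_1(y)$; then over $K_2$ one can split the isotropic subspace and write $q'' = h_{K_2}^{\oplus s}\oplus q_2$, with $q_2$ an $n$-dimensional form in $I^3(K_2)$ realizing the class of $q$ over $K$. Hence $\ed_n(q)\le \trdeg_k K_2 \le \trdeg_k K_1 + s(s+2n-1)/2$, and taking the infimum over $K_1$ gives the claim. For the inductive structure it suffices to do $s=1$ and iterate: splitting off one hyperbolic plane costs a point on a variety of dimension $\dim\Orth_{n+2}-\dim\Orth_n - 1 = (2n+1)-1 = 2n$...

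The main obstacle I anticipate is getting the dimension count exactly right and, more importantly, checking that the form $q_2$ obtained after extending to $K_2$ genuinely lies in $I^3(K_2)$ and maps to the class of $q$ in $\rT_n(K)$ — i.e.\ that splitting off a hyperbolic plane is compatible with the $\Spin$-versus-$\SO$ bookkeeping and that it is precisely $q$, not merely a form Witt-equivalent to it, that one recovers in $\rT_n$ (recall $\rT_n(K)$ is a set of \emph{isomorphism} classes of $n$-dimensional forms, not Witt classes, so the isotropic-subspace choice must be made carefully so that the complement has dimension exactly $n$). The cleanest route is probably to phrase the whole thing via the surjection $\H^1(\ast,\Orth_n)\to\H^1(\ast,\Orth_{n+2s})$ induced by $q\mapsto h^{\oplus s}\oplus q$, observe its fibers over a point are the $K$-points of the homogeneous space $\Orth_{n+2s}/(\Orth_n\times\text{(unipotent radical of the }s\text{-isotropic stabilizer)})$, and invoke the standard ``essential dimension drops by at most the dimension of the fiber'' lemma for such fibrations — the same mechanism used in Lemma~\ref{lem.T-Spin} and in~\cite[Proposition 1.13]{bf1} — after transporting along the $\Spin\to\SO$ fibration so that everything stays inside $\rT_n$ and $\rT_{n+2s}$. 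With that framing the inequality is immediate once the fiber dimension is computed to be $s(s+2n-1)/2$.
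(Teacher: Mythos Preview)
Your eventual argument --- pass to the orthogonal Grassmannian of totally isotropic $s$-planes for the descended form, find an intermediate field over which it acquires a rational point, split off $h^{\oplus s}$ there, and invoke Witt cancellation --- is exactly the paper's proof.

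Two points to clean up. First, the reason your initial detour fails is not that ``splitting off hyperbolics isn't free'' but that your claim ``in particular $t\ge s$'' is unjustified: the descended form $q'$ over $K_0$ may well be anisotropic, acquiring Witt index $\ge s$ only after base change to $K$. That is precisely why the Grassmannian step is needed, and once you see this the rest of your write-up is on target. Second, your dimension formula $\dim\Orth_{n+2s}-\dim\Orth_n-\dim\GL_s$ evaluates to $s(s+2n-1)$, twice the correct value: the stabilizer of a totally isotropic $s$-plane is a parabolic with Levi $\GL_s\times\Orth_n$, and you have omitted its unipotent radical, whose dimension equals that of the Grassmannian itself. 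Your own $s=1$ sanity check flags this --- a smooth quadric in $\PP^{n+1}$ has dimension $n$, not $2n$ --- so the iterated bound would be $\sum_{i=0}^{s-1}(n+2i)=s(s+n-1)$, which is even a bit sharper than the one-shot bound $s(s+2n-1)/2$ in the lemma, though either suffices for the application.
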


\begin{proof}
  Set $m \eqdef \ed_{n+2s}(h_{K}^{\oplus s} \oplus q)$. By definition, 
  $h^{\oplus s}_{K} \oplus q$ descends to an intermediate subfield
  $k \subset F \subset K$ such that $\trdeg_k(F) = m$.
  In other words, there is an $(n + 2s)$-dimensional quadratic 
  form $\widetilde{q} \in I^3(F)$ 
  such that $\widetilde{q}_K$ is $K$-isomorphic
  to $h_{K}^{\oplus s} \oplus q$. Let $X$ be the Grassmannian of
  $s$-dimensional subspaces of $F^{n+2s}$ which are totally isotropic
  with respect to $\widetilde{q}$.  The dimension of $X$ over $F$ is 
  $s(s + 2n -1)/2$.

  The variety $X$ has a rational point over $K$; hence there exists an
  intermediate extension $F \subseteq E \subseteq K$ such that
  $\trdeg_{F}E \leq s(s + 2n -1)/2$, with the property that
  $\widetilde{q}_{E}$ has a totally isotropic subspace of
  dimension~$s$. Then $\widetilde{q}_{E}$ splits as $h_{E}^{s} \oplus
  q'$, where $q' \in I^3(E)$.
  By Witt's Cancellation Theorem, $q'_K$ is $K$-isomorphic to $q$; 
  hence 
\[ \ed_{n}(q) \le \trdeg_k \, E = \trdeg_k \, F + \trdeg_F \, E =  
m + s(s + 2n -1)/2 \, , \]
as claimed.
\end{proof}

We now proceed with the proof of Theorem~\ref{t.pfister}.
For $n \leq 10$ the statement of the theorem 
is vacuous, because $2^{(n+4)/4}-n-2 \leq 0$. 
Thus we will assume from now on that $n \geq 12$. 

Lemma~\ref{lem.T-Spin} implies, in particular, that
$\ed(\rT_{n}; 2)$ is finite. Hence, there exist a field
$K/k$ and an $n$-dimensional form $q \in I^3(K)$ such 
that $\ed_{n}(q) = \ed(\rT_{n}; 2)$. We will show that 
this form has the properties asserted by Theorem~\ref{t.pfister}.
In fact, it suffices to prove that if $q$ is Witt equivalent to
\[
\sum_{i=1}^r \da{a_i,b_i,c_i}.
\]  
over $K$ then $r \ge \dfrac{2^{(n+4)/4} -n-2}{7}$. 
Indeed, $\ed_{n}(q_L) = \ed(\rT_{n}; 2)$ for any finite odd degree 
extension $L/K$. Thus if we can prove the above inequality for $q$, 
it will also be valid for $q_L$. 

Let us write a 3-fold Pfister form $\da{a,b,c}$ as
   $\da{a,b,c} = \generate{1} \oplus \da{a,b,c}_{0}$,
where
   \[
   \da{a,b,c}_{0} \eqdef
   \langle a_i, b_i, c_i, a_ib_i, a_i c_i, b_i c_i, a_i b_i c_i \rangle.
   \]
Set
   \[
   \phi \eqdef \begin{cases} 
 \text{$\sum_{1 = 1}^{r}\da{a_{i}, b_{i}, c_{i}}_{0}$, if $r$ is even, and} \\
   \text{$\generate{1} \oplus \sum_{1 = 1}^{r}\da{a_{i}, b_{i}, c_{i}}_{0}$,
if $r$ is odd.} \end{cases}
   \]
Then $q$ is Witt equivalent to 
$\phi$ over $K$; in particular, $\phi \in I^3(K)$. The dimension 
of $\phi$ is $7r$ or $7r+1$, depending on the parity of $r$. 

We claim that $n < 7r$. Indeed, assume the contrary. Then 
$\dim(q) \le \dim(\phi)$, so that $q$ is isomorphic to 
a form of type $h_{K}^{s} \oplus \phi$ over $K$. Thus 
\[  \frac{3n}{7} \geq 3r \geq\ed_{n}(q) = \ed(\rT_{n}; 2) 
\stackrel{\text{\tiny by Lemma~\ref{lem.T-Spin}}}{\ge} 
 \ed(\Spin_{n}; 2) - 1 \, . \]
The resulting inequality fails for every even $n \ge 12$ because for 
such $n$ \[ \ed(\Spin_n; 2) \ge n/2; \] 
see~Remark~\ref{rem.cs}.

So, we may assume that $7r > n$, i.e., 
$\phi$ is isomorphic to $h_{K}^{\oplus s}\oplus q$ over $K$,
for some $s \ge 1$.  By comparing dimensions we get 
the equality $7r = n + 2s$ when $r$ is even, 
and $7r+1 = n + 2s$ when $r$ is odd.
The essential dimension of the form $\phi$, as an element 
of $\rT_{7r}(K)$ or $\rT_{7r+1}(K)$ is at most $3r$, 
while Lemma~\ref{lem:decrease-ed} tells us that this essential 
dimension is at least $\ed_n(q) - s(s + 2n -1)/2$. 
From this, Lemma~\ref{lem.T-Spin} 
and Theorem~\ref{thm.spin}(a) we obtain the following chain of inequalities
   \begin{align} 
   3r &\geq \ed_{n}(q) - \frac{s(s + 2n -1)}{2} 
   = \ed(\rT_{n}; 2) - \frac{s(s + 2n -1)}{2} \nonumber \\
   & \geq \ed\Spin_{n} - 1 - \frac{s(s + 2n -1)}{2}  \label{e.quadratic} \\
   & \geq 2^{(n-2)/2} - \frac{n(n-1)}{2} - 1 - \frac{s(s + 2n -1)}{2}. \nonumber
   \end{align}

Now suppose $r$ is even. Substituting $s = (7r - n)/2$ into 
inequality~\eqref{e.quadratic}, we obtain
   \[
   \frac{49r^{2} + (14n+10)r - 2^{(n+4)/2} - n^{2} + 2n - 8}{8} \geq 0.
   \]
We interpret the left hand side as a quadratic polynomial in $r$. 
The constant term of this polynomial is negative
for all $n \geq 8$; hence this polynomial has one positive real 
root and one negative real root. Denote the positive root by $r_+$.
The above inequality is then equivalent to $r \geq r_+$. 
By the quadratic formula 
   \[ r_+ = \frac{\sqrt{49 \cdot 2^{(n+4)/2} + 168n - 367} -(7n + 5)}{49}
\ge
      \frac{2^{(n+4)/4} -n-2}{7}\, . \]
   This completes the proof of Theorem~\ref{t.pfister} when $r$ is
   even. If $r$ is odd then substituting $s = (7r+1-n)/2$ 
   into~\eqref{e.quadratic},
   we obtain an analogous quadratic inequality whose positive root is
   \[ r_+ = \frac{\sqrt{49 \cdot 2^{(n+4)/2} + 168n - 199} -(7n + 12)}{49}\\
      \geq  \frac{2^{(n+4)/4} -n-2}{7}\, , \] 
and Theorem~\ref{t.pfister} follows.
\qed

\bibliographystyle{amsalpha}
\bibliography{ed}
\end{document}
\def\cprime{$'$}
\providecommand{\bysame}{\leavevmode\hbox to3em{\hrulefill}\thinspace}
\providecommand{\MR}{\relax\ifhmode\unskip\space\fi MR }
\providecommand{\MRhref}[2]{%
  \href{http://www.ams.org/mathscinet-getitem?mr=#1}{#2}
}
\providecommand{\href}[2]{#2}

\end{document}